\documentclass[11pt,reqno]{amsart}
\usepackage[T1]{fontenc}
\usepackage{lmodern}
\usepackage{amsmath,amssymb,mathtools}
\usepackage[margin=29mm]{geometry}
\usepackage{microtype}
\usepackage[hidelinks]{hyperref}
\usepackage{bookmark}
\numberwithin{equation}{section}
\theoremstyle{plain}
\newtheorem{theorem}{Theorem}[section]
\newtheorem{proposition}[theorem]{Proposition}
\newtheorem{lemma}[theorem]{Lemma}
\newtheorem{corollary}[theorem]{Corollary}
\theoremstyle{remark}

\newcommand{\N}{\mathbb N}
\newcommand{\Z}{\mathbb Z}
\newcommand{\R}{\mathbb R}
\newcommand{\T}{\mathbb T}
\newcommand{\PP}{\mathbb P}
\newcommand{\EE}{\mathbb E}
\newcommand{\cA}{\mathcal A}
\newcommand{\cF}{\mathcal F}
\newcommand{\cI}{\mathcal I}
\newcommand{\cT}{\mathcal T}

\allowdisplaybreaks[1]

\title[The Erd\H{o}s similarity problem for null sequences]
{The Erd\H{o}s similarity problem for null sequences with adjacent ratios bounded below}
\author{Kan Jiang}
\address{Department of Mathematics, Ningbo University,
Ningbo 315211, P. R. China}
\email{jiangkan@nbu.edu.cn}

\author{Lifeng Xi}
\address{Department of Mathematics, Ningbo University,
Ningbo 315211, P. R. China}
\email{xilifeng@nbu.edu.cn; xilifengningbo@yahoo.com}
\thanks{This work is supported by the National Natural Science Foundation
of China (Nos.~12671112, 12571096, 12171259, 11771226).}

\date{}
\subjclass[2020]{28A80}
\keywords{Erd\H{o}s similarity conjecture, affine copy, null sequence,
Lebesgue measure, probabilistic construction}
\hypersetup{
  pdftitle={The Erdos similarity problem for null sequences with adjacent ratios bounded below},
  pdfauthor={Kan Jiang and Lifeng Xi},
  pdfsubject={2020 Mathematics Subject Classification: 28A80},
  pdfkeywords={Erdos similarity conjecture, affine copy, null sequence, Lebesgue measure, probabilistic construction}
}

\begin{document}

\begin{abstract}
Let $(a_n)$ be a positive null sequence with
$\liminf_{n\to\infty}a_{n+1}/a_n>0$. For every $\eta>0$, we construct
a compact set $E\subset[0,1]$ with Lebesgue measure greater than
$1-\eta$ containing no nontrivial affine copy of $\{a_n:n\ge1\}$.
No monotonicity is assumed. The proof adapts a finite-tree construction
for geometric sequences to grids chosen from a subsequence with
two-sided ratio bounds. We also obtain one such set for any prescribed
countable family of sequences, with infinitely many distinct points
omitted from every affine copy.
\end{abstract}

\maketitle

\section{Introduction}

A set $A\subset\R$ is \emph{measure universal} if every Lebesgue
measurable set of positive measure contains a nontrivial affine copy
\[
 x+sA=\{x+sa:a\in A\},\qquad x\in\R,\quad s\in\R\setminus\{0\}.
\]
The Erd\H{o}s similarity conjecture states that no infinite set is
measure universal; see Erd\H{o}s~\cite{Erdos1974}. In contrast, every
finite set is measure universal. Indeed, if $F$ is finite and $B$ has finite positive
measure, continuity of translations in $L^1$ gives
\[
 m\left(\bigcap_{a\in F}(B-sa)\right)\longrightarrow m(B)>0
 \qquad(s\to0),
\]
where $m$ denotes Lebesgue measure. For accounts of the problem, see
Svetic~\cite{Svetic2001} and Jung, Lai and
Mooroogen~\cite{JungLaiMooroogen2025}.

We consider positive sequences tending to zero whose adjacent ratios
are eventually bounded away from zero. The following is our main
result.

\begin{theorem}\label{thm:main}
Let $a_n>0$, $a_n\to0$, and
\begin{equation}\label{eq:main-hypothesis}
 \liminf_{n\to\infty}\frac{a_{n+1}}{a_n}>0.
\end{equation}
For every $\eta\in(0,1)$ there is a compact set $E\subset[0,1]$ such that
$m(E)>1-\eta$ and
\begin{equation}\label{eq:main-conclusion}
 \forall x\in\R\ \forall s\in\R\setminus\{0\}\ \exists n\ge1:
 \qquad x+sa_n\notin E.
\end{equation}
In particular, $\{a_n:n\ge1\}$ is not measure universal.
\end{theorem}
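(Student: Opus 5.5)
We would first reduce to a subsequence with two-sided ratio bounds. By \eqref{eq:main-hypothesis} there are $c\in(0,1)$ and $N$ with $a_{n+1}\ge c\,a_n$ for $n\ge N$. Since $a_n\to0$, we can pick indices $n_1<n_2<\cdots$ greedily so that $b_k=a_{n_k}$ satisfies
\[
 \lambda\le b_{k+1}/b_k\le \lambda/c
\]
for a fixed small $\lambda$. Each time, take the first index after $n_k$ at which $a_n$ drops below $\lambda b_k$. The lower bound then comes from the adjacent-ratio bound at that crossing. It suffices to prove \eqref{eq:main-conclusion} for $\{b_k\}$, and by passing to further subsequences we may take $\lambda$ as small as we like. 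So the sequence behaves like a geometric one with ratio in $[\lambda,\lambda/c]$, at every scale but with a perturbed grid.

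Next we would run a finite-tree (multiscale) random construction. Fix scales $r_j$ decreasing extremely fast. At stage $j$ we handle all pairs $(x,s)$ with $|s|\in[r_{j+1},r_j]$ (allowing negative $s$ symmetrically); only these $s$ matter, since $s$ with $x+s b_k\in[0,1]$ for all $k$ forces $|s|\lesssim 1/b_1$. For such $s$, the points $x+sb_k$ for $k$ in a block $K_j$ of $L_j$ consecutive indices lie at mutually comparable but separated distances from $x$: consecutive gaps $s(b_k-b_{k+1})$ have ratio in $[\lambda,\lambda/c]$. Discretize $(x,s)$ to a finite grid of mesh $\delta_j$ much smaller than $r_{j+1}b_{\max K_j}$, and remove a random set $U_j$. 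Tile $[0,1]$ by intervals of length $\ell_j$ comparable to $r_{j+1}b_{\max K_j}$, and delete each independently with probability $p_j$. The union of deleted pieces has expected measure $p_j$. We take $\sum p_j<\eta$ and make the deleted pieces open so that $E=[0,1]\setminus\bigcup U_j$ is compact.

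The key estimate is the following. For a fixed grid pair $(x,s)$, the points $x+sb_k$, $k\in K_j$, lie in distinct tiles, because consecutive points are separated by at least $|s|b_k(1-\lambda/c)\gg\ell_j$. Their survival events are therefore independent, and the copy survives with probability $(1-p_j)^{L_j}$. A $\delta_j$-perturbation of $(x,s)$ moves each point by less than $\ell_j$, so with a slight fattening of the deleted tiles every nearby pair is also killed. The number of grid pairs at stage $j$ is at most $\delta_j^{-2}$. Taking $L_j$ so large that $\delta_j^{-2}(1-p_j)^{L_j}<1$, a union bound yields, for each $j$, a deterministic choice of deleted tiles killing every pair with $|s|\in[r_{j+1},r_j]$. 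The bound $\lambda\le b_{k+1}/b_k$ is what keeps the grid size $\delta_j^{-1}$ at most polynomial in $b_{\max K_j}^{-1}$, so the circularity is harmless. The ordering is to choose $r_j$ and $L_j$ first, then $\ell_j$ and $\delta_j$, then $r_{j+1}$ smaller still.

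The main obstacle is this circularity between the stage-$j$ grid and the stage's own block, where the non-geometric grid matters. The number of pairs is exponential in $\log(1/\ell_j)$, which is roughly $L_j\log(1/\lambda)$. The success probability is only $(1-p_j)^{L_j}\approx e^{-p_jL_j}$. A naive union bound therefore fails when $p_j$ is small. We would fix this the way the geometric finite-tree construction does, by making the deletions hierarchical along the tree of scales. Within the block, delete at scale $|s|b_k$ for each $k$ separately, with tile length $\epsilon|s|b_k$. A pair then needs to be discretized only to precision $\epsilon|s|b_k$ at level $k$, and the count of relevant pairs at level $k$, conditional on coarser levels, grows by a bounded factor $O(\lambda^{-1}c^{-1}\epsilon^{-1})$ per level. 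We would compare this with the per-level survival factor $1-p$ and control the resulting branching process: survival of a copy means survival along a path in a tree with offspring mean below one once $\epsilon$ is tuned relative to $p$. The two-sided ratio bounds are exactly what make the per-level branching factor uniform.
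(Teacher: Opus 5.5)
\textbf{There is a genuine gap.} Your subsequence reduction and your diagnosis of why a one-stage union bound fails are correct. But your proposed repair does not close the argument, and the two ideas that carry the paper's proof are missing.

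\textbf{The branching process is supercritical.} In your own accounting, each level multiplies the number of pair-cells by a factor of order $\lambda^{-1}\epsilon^{-1}$ and multiplies the survival probability by $1-p$. No choice of $\epsilon$ helps: refining the center from precision $\epsilon|s|b_k$ to $\epsilon|s|b_{k+1}$ already multiplies the count by $b_k/b_{k+1}\ge c/\lambda>1$. Meanwhile the independent deletions at each of the $L$ levels add about $p$ to the removed measure, so you need $pL\lesssim\eta$. The offspring mean is therefore at least about $(c/\lambda)(1-\eta/L)>1$. The underlying problem is that with independent deletions, every gain in the failure exponent is paid for in removed density, while the count grows by a fixed factor per level.

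\textbf{The paper uses routing instead.} Bernoulli-$1/2$ selector tables on the edges of an $M$-ary tree send each point to exactly one leaf. There a Bernoulli-$\tau$ table decides membership, so $\EE\rho(B)=\tau$ however large $M$ is. Suppose the path of a stable center $x$ first takes the default child at a vertex $U$; then all $M-1$ selectors read at $x$ there are zero. Later translations preserve earlier cell indices. So each of the $(M-1)r$ points $x+tb_n$ with $n\in W_{e_i}$ reaches $U$, rejects children $1,\ldots,i-1$, and then uses a fresh selector and a fresh terminal entry. This gives conditional failure $(1-\tau/2)^{(M-1)r}$. Each test is resolved on the grid with index $v_{e_i}^*<n+2r$. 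Hence, for fixed $x$, the number of representative dilations is additive over the points, namely $O\bigl((M-1)r\alpha^{-2r}\bigr)$. Choosing $\tau(M-1)/2>2\log(1/\alpha)$ beats this count, and this is where the lower ratio bound is actually used.

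\textbf{You cannot union-bound over centers.} You aim for a deterministic choice killing every pair $(x,s)$ of a stage, via a union bound over a grid of centers. The paper never does this, and its estimates could not support it:
\begin{itemize}
\item unstable centers have positive density;
\item at a stable center the available bound is only $2\tau$, because the no-default event alone has probability $(1-2^{1-M})^d$;
\item the number of center cells on the finest grid is exponential in the total span of the tree.
\end{itemize}
Instead, the paper first fattens $B$ to an open $B^+$ and bounds the expected density of the closed set $R$ of exceptional centers by $3\tau$. It then covers $R$ by an open periodic $V$ of small density. Finally it uses $b_n\to0$: for $x\in R\subset V$, the points $x+tb_n$ lie in $V$ for all large $n$, including indices outside $\cI$.

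Without both the routing mechanism and this treatment of exceptional centers, your scheme does not reach the geometric case, which the theorem must cover.
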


The sequence need not be decreasing, and its adjacent ratios need not
converge. The set $E$ depends on the given sequence and on $\eta$.

Falconer~\cite{Falconer1984} and Eigen~\cite{Eigen1985} proved the
conjecture for positive decreasing null sequences with
$a_{n+1}/a_n\to1$. Humke and Laczkovich~\cite{HumkeLaczkovich1998}
gave a finite covering characterization and further criteria involving
relative gaps. Kolountzakis~\cite{Kolountzakis1997} used a probabilistic
construction. Chleb\'ik~\cite[Theorem~15]{Chlebik2015} proved the
following translation-invariant criterion: a bounded infinite set is
not measure universal if it contains finite subsets
$x_1>\cdots>x_m$, with $m\to\infty$, such that
\begin{equation}\label{eq:relative-gap}
 -\log\left(\frac{\min_{i<m}(x_i-x_{i+1})}{x_1-x_m}\right)=o(m).
\end{equation}
This criterion does not include geometric sequences. For $m$ distinct
powers of $q\in(0,1)$, arranged in decreasing order, the normalized
minimum gap is at most $q^{m-2}/(1-q)$.

Bourgain~\cite{Bourgain1987} proved that the sum of three infinite
sets is not measure universal. More recently, Mora Cuellar, Iosevich,
Kulkarni, Rojas Aravena and Yavicoli~\cite[Corollary~6(b)]{Mora2026}
considered two-fold sums and differences. Under the lower ratio
condition~\eqref{eq:main-hypothesis}, for a decreasing sequence,
they proved that $\{a_n:n\ge1\}+A$ and $\{a_n:n\ge1\}-A$ are not
measure universal for every infinite set $A$. These results do not
imply nonuniversality of the sequence itself.

The bi-Lipschitz version has a different answer. Feng, Lai and
Xiong~\cite[Theorem~1.1]{FengLaiXiong2024} proved, in particular, that
if a positive strictly decreasing null sequence satisfies
\[
 \limsup_{n\to\infty}\frac{a_{n+1}}{a_n}<1,
\]
then it has a bi-Lipschitz image in every measurable set of positive
Lebesgue measure. This applies to every geometric sequence. A
bi-Lipschitz map need not be affine, so their result does not imply
measure universality in the sense used here. 

There are also results in which Hausdorff dimension replaces Lebesgue
measure. Keleti~\cite{Keleti2008} constructed a compact set of Hausdorff
dimension one that avoids similar copies of any prescribed countable
collection of three-point sets. Cruz, Lai and
Pramanik~\cite{CruzLaiPramanik2023} constructed Borel sets of Hausdorff
dimension one and Lebesgue measure zero avoiding affine copies of
$\{0\}\cup\{a_n:n\ge1\}$ under suitable decay assumptions. Their
result includes geometric sequences. These dimension statements do
not give a positive-measure avoiding set.

Gallagher, Lai and Weber~\cite{GallagherLaiWeber2023} studied the
corresponding question for dense $G_\delta$ sets. They proved that
Cantor sets are not topologically universal, whereas every countable
set is topologically universal. Jung and Lai~\cite{JungLai2024}
characterized topological universality by strong measure zero.
For the measure problem on Cantor sets,
Kolountzakis~\cite{Kolountzakis2023} constructed full-measure sets
avoiding certain thin Cantor sets. A different question concerns
unbounded sequences and sets having large measure in every unit
interval; see Kolountzakis and Papageorgiou~\cite{KolountzakisPapageorgiou2025}.

Our proof uses the finite-tree construction of the unpublished
manuscripts~\cite{OpenAIDyadic,OpenAIGeometric}. The second manuscript
treats $q^n$ for every fixed $q\in(0,1)$. We adapt its grids and index
intervals to a sequence with variable ratios. After passing to a
subsequence, the required bounds are
\begin{equation}\label{eq:two-sided-intro}
 0<\alpha\le\frac{b_{n+1}}{b_n}\le\beta<1.
\end{equation}
The upper bound gives separation and ensures that later translations
preserve earlier grid cells outside a small exceptional set. The lower
bound controls the number of grid boundaries crossed as the dilation
varies. The proof is included below; the geometric case and the tree
construction are due to~\cite{OpenAIDyadic,OpenAIGeometric}.

\subsection{Outline of the proof}
We construct small open $1$-periodic sets meeting
$x+t\{b_n:n\ge1\}$ for every $x\in\R$ and $t\in[1,2]$.
Dilations and reflections then cover all nonzero scales.

We assign an interval of indices, called a window, to each edge of a
finite tree. A local test uses only the tables on that edge and in its
child subtree. By choosing the window lengths, we bound the number of
representative dilations exponentially in the length. The number of
children is chosen so that the failure probability decreases faster.
Using the finest grid of the whole tree would not give this estimate.

After enlarging the random set to an open set, the exceptional centers
form a small closed set. An open neighborhood of this set meets every
remaining copy, since $x+tb_n\to x$. Thus no center is left out.

\section{Subsequence selection and periodic hitting sets}

We first select a subsequence satisfying the two-sided
bounds~\eqref{eq:two-sided-intro}.

\begin{lemma}\label{lem:subsequence}
Under the assumptions of Theorem~\ref{thm:main}, there are indices
$n_1<n_2<\cdots$ and a constant $\alpha>0$ such that
$b_j=a_{n_j}$ satisfies
\[
 \alpha\le\frac{b_{j+1}}{b_j}\le\frac12\qquad(j\ge1).
\]
\end{lemma}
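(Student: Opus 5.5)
Fix $\gamma\in(0,1)$ and $N_0$ with $a_{n+1}/a_n\ge\gamma$ for all $n\ge N_0$; this is possible by~\eqref{eq:main-hypothesis}. We define the indices greedily. Put $n_1=N_0$. Given $n_j$, let
\[
 n_{j+1}=\min\{n>n_j:\ a_n\le a_{n_j}/2\}.
\]
This set is nonempty because $a_n\to0$ and $a_{n_j}>0$, so $n_{j+1}$ is well defined and $n_1<n_2<\cdots$. By construction $b_{j+1}/b_j\le\tfrac12$ for every $j$, which is the upper bound.

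For the lower bound, use minimality of $n_{j+1}$. Since $n_{j+1}-1\ge n_j$, there are two cases. If $n_{j+1}-1=n_j$, then $a_{n_{j+1}-1}=a_{n_j}$. Otherwise $n_{j+1}-1>n_j$ and minimality forces $a_{n_{j+1}-1}>a_{n_j}/2$. In either case $a_{n_{j+1}-1}\ge a_{n_j}/2$. Since $n_{j+1}-1\ge N_0$, the one-step ratio bound gives
\[
 b_{j+1}=a_{n_{j+1}}\ge\gamma\,a_{n_{j+1}-1}\ge\frac{\gamma}{2}\,a_{n_j}=\frac{\gamma}{2}\,b_j .
\]
So $\alpha=\gamma/2$ works.

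The only real point is that no monotonicity is assumed. The greedy rule handles this because it compares with the running reference value $a_{n_j}$ rather than with neighbouring terms. Intermediate terms may exceed $a_{n_j}$, but this is harmless: the lower bound uses only the single step from $n_{j+1}-1$ to $n_{j+1}$, together with the fact that $a_{n_{j+1}-1}$ has not yet dropped below $a_{n_j}/2$.
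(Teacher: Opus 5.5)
Your proof is correct and follows the paper's argument: choose $n_{j+1}$ as the first index after $n_j$ with $a_n\le a_{n_j}/2$, use minimality to get $a_{n_{j+1}-1}\ge a_{n_j}/2$, and apply the eventual one-step ratio bound once, giving $\alpha=\gamma/2$. Your separate treatment of the case $n_{j+1}-1=n_j$ is a point the paper leaves implicit; there the inequality is in fact strict, since $a_{n_j}>0$.
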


\begin{proof}
Choose $c\in(0,1)$ and $N_0$ such that $a_{n+1}\ge ca_n$ for
$n\ge N_0$. Choose $n_1\ge N_0$. Once $n_j$ is chosen, let
$n_{j+1}>n_j$ be the first index for which
$a_{n_{j+1}}\le a_{n_j}/2$. Such an index exists because $a_n\to0$.
By minimality, $a_{n_{j+1}-1}>a_{n_j}/2$, and hence
\[
 \frac c2 a_{n_j}<a_{n_{j+1}}\le\frac12 a_{n_j}.
\]
The conclusion follows with $\alpha=c/2$. This argument does not use
monotonicity of $(a_n)$.
\end{proof}

For the rest of the construction, fix a positive sequence $(b_n)$
satisfying~\eqref{eq:two-sided-intro}. Then $(b_n)$ is strictly decreasing
and tends to zero. For a measurable $1$-periodic set $H\subset\R$, write
\[
 \rho(H)=m(H\cap[0,1)).
\]
We also identify $H$ with its image in $\T=\R/\Z$ when convenient.

\begin{proposition}\label{prop:periodic}
For every $\tau\in(0,1)$ there is an open $1$-periodic set $H$ with
$\rho(H)\le6\tau$ such that
\begin{equation}\label{eq:periodic-target}
 \forall x\in\R\ \forall t\in[1,2]\ \exists n\ge1:
 \qquad x+tb_n\in H.
\end{equation}
\end{proposition}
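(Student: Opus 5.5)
We build $H$ as a union of random unions of grid cells, organized along a finite tree; the proof has three stages.

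\emph{Stage 1: grids and a local test.} Fix a large integer $L$ and split the indices into consecutive windows $W=\{k,\dots,k+\ell-1\}$. For $n\in W$ take the grid of mesh $\delta_n=\epsilon b_n$ on $\T$. A table at level $n$ is a set $R_n$ of grid cells of mesh $\delta_n$, each chosen independently with probability $p$. We want $R_n$ to contain $x+tb_n$ (up to a small fattening) for most $x$ and all $t\in[1,2]$.

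\emph{Stage 2: counting representative dilations.} Fix a cell $I$ at the coarsest scale of $W$ and let $t$ range over $[1,2]$. For $x\in I$, the point $x+tb_n$ moves across $O(b_n/\delta_n)=O(1/\epsilon)$ cells of mesh $\delta_n$. Consider the joint cell pattern $(\text{cell of }x+tb_n)_{n\in W}$.
\begin{itemize}
\item The lower bound $b_{n+1}\ge\alpha b_n$ gives $b_k/b_{k+\ell}\le\alpha^{-\ell}$. So $x$ ranges over $\alpha^{-\ell}/\epsilon$ finest cells, and the number of distinct patterns per coarse cell is at most $(C/\epsilon\alpha)^{\ell}$.
\item The upper bound $b_{n+1}\le\beta b_n$ makes the positions at different levels essentially independent. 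Given the earlier levels, the level-$n$ cell hit by a fixed pattern is determined, and distinct levels use disjoint independent tables.
\end{itemize}
The probability that a fixed pattern misses every table in $W$ is therefore at most $(1-p)^{\ell}$.

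\emph{Stage 3: the tree.} Assemble windows along a finite tree. Each edge carries one window, and each node has many children with independent tables. A copy $(x,t)$ fails at a node only if it fails on the edge to every child, so the failure probabilities multiply over the $K$ children. Choose $K$ and $\ell$ so that
\[
(C/\epsilon\alpha)^{\ell}(1-p)^{K\ell}\ll 1 .
\]
This makes the union bound over representative dilations succeed at each node.

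The expected measure of $H$ is about $p$ times the number of levels used, fattened by a factor $O(1)$. With $p$ chosen as $\tau$ divided by the number of levels, we aim for $\rho(H)\le 6\tau$ after fattening. The constant $6$ collects the contributions of the tables, the fattening and the exceptional neighbourhood. Fatten each chosen cell to an open interval of slightly larger length, which absorbs boundary effects. The set of centers $x$ that are not caught is then a closed set $Z$ of measure $\le\tau$ with positive probability. Since $x+tb_n\to x$ uniformly in $t\in[1,2]$, adding an open neighbourhood $U\supset Z$ with $m(U)\le 2\tau$ catches every remaining copy; this is the last step described in the outline.

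The main obstacle is the counting in Stage 2. It must use only the grids of the edge and its child subtree, not the finest grid of the whole tree, as the outline warns. One must show that the number of effective dilations grows like $C^{\ell}$ while the miss probability decays like $(1-p)^{K\ell}$. A careful conditioning argument is needed so that the levels really act independently despite boundary crossings.
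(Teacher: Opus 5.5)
There is a genuine gap. It lies in the measure accounting, not in the Stage~2 counting that you flag as the main obstacle.

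In your scheme $H$ is a fattening of $\bigcup_n R_n$, where the $R_n$ are independent tables of density $p$. A fixed pattern of a copy is tested once per level in the $K$ child windows, so it misses with probability $(1-p)^{K\ell}$. A fixed point $z\in\T$, however, is tested against every table. So for every $z$ we have $\PP(z\notin H)\le(1-p)^{\#\text{levels}}\le(1-p)^{K\ell}$, and hence $\EE\rho(H)\ge1-(1-p)^{K\ell}$.

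Your Stage~3 requirement $(C/\epsilon\alpha)^{\ell}(1-p)^{K\ell}\ll1$ therefore forces $\EE\rho(H)\ge1-(C/\epsilon\alpha)^{-\ell}$, i.e.\ nearly full measure. Equivalently, your choice $p=\tau/\#\text{levels}$ gives $pK\ell\le\tau$, hence $(1-p)^{K\ell}\ge e^{-2\tau}$ for $p\le1/2$. The union bound over $(C/\epsilon\alpha)^{\ell}\ge1$ patterns then cannot close. The tree plays no role: in a union of independent per-level sets, extra children buy trials for a copy at exactly the rate they cost measure at a single point. Letting $y_n$ also be tested against the tables of other levels does not rescue this. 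It gives only of order $\tau\ell$ expected successes against roughly $\alpha^{-\ell}$ representative dilations, so the union bound would still need $\tau\gtrsim\log(1/\alpha)$.

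The missing idea is a mechanism that pins the density of the random set while letting a copy be tested many times. The paper does this by routing points through the tree:
\begin{itemize}
\item Each edge $(P,i)$ with $i<M$ carries a Bernoulli-$1/2$ selector table on $\Gamma_{v_e}$.
\item A point $z$ moves to the first child whose selector at its cell is $1$, and to child $M$ by default.
\item $z\in B$ iff the Bernoulli-$\tau$ entry of its leaf table at its cell is $1$.
\end{itemize}
Hence $\PP(z\in B)=\tau$ for every $z$, whatever $M$, $d$ and the window lengths are; see~\eqref{eq:mean-density}.

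For a stable center $x$, condition on the selector entries read at $x$. At the first vertex $U$ where $x$ takes the default child, all $M-1$ selectors at $x$ are $0$. Stability (Lemma~\ref{lem:stable}) makes each $x+tb_n$, $n\in W_{(U,i)}$, follow the path of $x$ to $U$ and reject children $1,\dots,i-1$. Separation (Lemma~\ref{lem:separation}) places these points in fresh selector cells and at distinct terminal addresses. This yields $(M-1)r$ tests that, conditionally on the exposed entries, all fail with probability $(1-\tau/2)^{(M-1)r}$.

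Only the $D(r)=4+2(M-1)r(1+2\gamma\alpha^{-2r})$ dilations seen by the edge and its child subtree need to be counted. Then $D(r)(1-\tau/2)^{(M-1)r}\to0$ once $\tau(M-1)/2>2\log(1/\alpha)$. The no-default event has probability $(1-2^{1-M})^d<\tau$, and unstable centers have density at most $4K\gamma\beta^{g+1}$.

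Your endgame matches the paper's: open fattening, a closed exceptional set, and an open neighbourhood that catches the remaining copies because $x+tb_n\to x$. But without a routing mechanism of this kind, Stage~3 cannot be carried out.
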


Sections~\ref{sec:grids}--\ref{sec:scales} prove this proposition.
We first deduce the main theorem from it.

\begin{proof}[Proof of Theorem~\ref{thm:main} from
Proposition~\ref{prop:periodic}]
Choose $(b_n)$ by Lemma~\ref{lem:subsequence}. For each $k\in\Z$, let
\[
 \tau_k=\frac{\eta}{64}4^{-|k|},
\]
and choose $H_k$ from Proposition~\ref{prop:periodic}. Define
\begin{equation}\label{eq:global-set}
 O=\bigcup_{k\in\Z}\bigl(2^kH_k\cup(-2^kH_k)\bigr),
 \qquad E=[0,1]\setminus O.
\end{equation}
The set $O$ is open and symmetric, so $E$ is compact. For either sign,
periodicity gives
\[
 m\bigl((\pm2^kH_k)\cap[0,1]\bigr)
 \le\max\{1,2^k\}\rho(H_k).
\]
For $k\le0$, the interval $[0,1]$ consists of an integer number of
periods. For $k>0$, it is contained in a period interval of length
$2^k$. Therefore
\begin{align*}
 m(O\cap[0,1])
 &\le12\sum_{k\in\Z}\tau_k\max\{1,2^k\}\\
 &=\frac{12\eta}{64}
   \left(\sum_{j\ge1}4^{-j}+1+\sum_{k\ge1}2^{-k}\right)
 =\frac{7\eta}{16}<\eta.
\end{align*}
Thus $m(E)>1-\eta$.

Given $x\in\R$ and $s>0$, write $s=2^kt$ with $k\in\Z$ and
$t\in[1,2)$. Apply~\eqref{eq:periodic-target} for $H_k$ at the center
$2^{-k}x$. For some $n$,
\[
 2^{-k}x+tb_n\in H_k,
 \qquad x+sb_n\in2^kH_k\subset O.
\]
For $s<0$, apply the positive-scale assertion to $-x$ and $-s$, then
use $O=-O$. Since every $b_n$ is a term of the original sequence,
\eqref{eq:main-conclusion} follows. Only the set $H_k$ was rescaled;
no relation between $(b_n)$ and powers of two is needed.
\end{proof}

\section{Grids and index windows}\label{sec:grids}

\subsection{Nested grids}
Choose $n_0$ so that $2b_{n_0}\le1/4$, and let
\[
 \gamma=\frac4{1-\beta},\qquad
 N_j=2^{\lceil\log_2(\gamma/b_j)\rceil},\qquad
 \Gamma_j=N_j^{-1}\Z\qquad(j\ge n_0).
\]
The integers $N_j$ are powers of two and are nondecreasing. They satisfy
\begin{equation}\label{eq:grid-size}
 \frac{\gamma}{b_j}\le N_j<\frac{2\gamma}{b_j}.
\end{equation}
For $z\in\R$, define the periodic cell index
\[
 J_j(z)=\lfloor N_j\{z\}\rfloor,
 \qquad \{z\}=z-\lfloor z\rfloor.
\]
Cells are closed on the left and open on the right. If $j'\ge j$, then
\begin{equation}\label{eq:nested}
 J_j(z)=\left\lfloor\frac{J_{j'}(z)}{N_{j'}/N_j}\right\rfloor.
\end{equation}
Thus the finer cell index determines every coarser one.

\subsection{Windows on a finite tree}
Let $M\ge2$, $d\ge1$, $g\ge1$, and $r_0\ge1$ be integers,
to be chosen later. Consider the complete ordered $M$-ary tree of height $d$.
Leaves have height zero. If $P$ is an internal vertex, its children
are $P_1,\ldots,P_M$, and $e=(P,i)$ denotes the edge from $P$ to $P_i$.
Let $K$ be the total number of edges.

List the edges in preorder: for each $i=1,\ldots,M$, list $(P,i)$
followed by all the edges below $P_i$, in the same recursive order.
An edge and its child subtree form one consecutive block.

Assign a window $W_e=[u_e,v_e]\cap\N$ to every edge. Consecutive
windows are separated by exactly $g$ unused indices. All edges leaving
a vertex of height $h$ have window length $r_h$. Let $\sigma_h$ be
the total index span of a subtree of height $h$, including its internal
gaps. Define
\begin{align*}
 r_1&=r_0,& \sigma_1&=Mr_1+(M-1)g,\\
 r_h&=\max\{r_0,g+\sigma_{h-1}\},&
 \sigma_h&=M(r_h+g+\sigma_{h-1})+(M-1)g
 \quad(2\le h\le d).
\end{align*}
Starting at $n_0$, place the windows in the prescribed order. If $f$
immediately precedes $e$, then $u_e=v_f+g+1$. Let
\[
 \cI=\bigcup_e W_e.
\]
Each edge uses the grid with index $v_e$. If $e=(P,i)$, let $v_e^*$
be the largest endpoint of a window on $e$ or below $P_i$.

\begin{lemma}\label{lem:span}
If $e$ leaves a vertex of height $h$, then
\begin{equation}\label{eq:span}
 v_e^*-u_e+1\le2r_h.
\end{equation}
\end{lemma}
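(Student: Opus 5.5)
Since the edges are listed in preorder, the window $W_e$ of $e=(P,i)$ is followed immediately (after exactly $g$ unused indices) by the windows of all edges in the child subtree rooted at $P_i$. That subtree has height $h-1$, and these windows occupy one consecutive block. Hence $v_e^*$ is either $v_e$ itself, when $h=1$ and $P_i$ is a leaf, or the last index of that block. In the second case I will compute the block's extent directly from the definition of $\sigma_{h-1}$.

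\emph{Case $h=1$.} There is nothing below $P_i$, so $v_e^*=v_e$ and $v_e^*-u_e+1=r_1=r_0\le 2r_1$.

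\emph{Case $h\ge2$.} The window $W_e$ has length $r_h$, so $v_e=u_e+r_h-1$. The first window below $P_i$ starts at $v_e+g+1$. By definition, $\sigma_{h-1}$ is the total index span of a subtree of height $h-1$, including internal gaps. So the last index used below $P_i$ is $v_e+g+\sigma_{h-1}$, giving
\[
 v_e^*-u_e+1=r_h+g+\sigma_{h-1}.
\]
The recursion gives $r_h=\max\{r_0,g+\sigma_{h-1}\}\ge g+\sigma_{h-1}$. Therefore the span is at most $2r_h$, which is \eqref{eq:span}.

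The only step requiring care is confirming that $\sigma_h$, as defined by the recursion, really equals the span of a height-$h$ subtree under the placement rule. I would check this by induction on $h$. A height-$1$ subtree consists of $M$ windows of length $r_1$ with $M-1$ gaps of length $g$, giving $\sigma_1$. A height-$h$ subtree consists of $M$ blocks, each an edge window of length $r_h$ followed by a gap $g$ and a height-$(h-1)$ subtree of span $\sigma_{h-1}$. These blocks are separated by $M-1$ gaps of length $g$, matching the recursion for $\sigma_h$. With this identification, the case $h\ge2$ above is immediate.
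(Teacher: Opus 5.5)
Your proof is correct and follows the paper's argument: for $h=1$ the span is $r_1$, and for $h\ge2$ it is $r_h+g+\sigma_{h-1}\le 2r_h$ by the choice $r_h=\max\{r_0,g+\sigma_{h-1}\}$. Your inductive check that the recursion for $\sigma_h$ matches the actual preorder placement of windows is a detail the paper leaves implicit, and it is consistent with that placement.
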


\begin{proof}
For $h=1$, the span is $r_1$. For $h\ge2$, it is
$r_h+g+\sigma_{h-1}$, which is at most $2r_h$ by the choice of $r_h$.
\end{proof}

The bound~\eqref{eq:span} controls the finest grid in each local
test. The gaps ensure that translations from later windows preserve
earlier cell indices at the centers defined below.

\subsection{Stable centers}
A center $x\in\R$ is called stable if, for every noninitial window
$W_e$, with $v'$ the endpoint of the preceding window,
\begin{equation}\label{eq:stability-definition}
 (x,x+2b_{u_e}]\cap\Gamma_{v'}=\varnothing.
\end{equation}
Let $G$ denote the set of stable centers.

\begin{lemma}\label{lem:stable}
The set $G$ is measurable and $1$-periodic, and
\begin{equation}\label{eq:bad-density}
 \rho(\R\setminus G)\le4K\gamma\beta^{g+1}.
\end{equation}
For $x\in G$, $n\in W_e$, and $t\in[1,2]$, every edge $f$ preceding
$e$ satisfies
\begin{equation}\label{eq:earlier-indices}
 J_{v_f}(x+tb_n)=J_{v_f}(x).
\end{equation}
\end{lemma}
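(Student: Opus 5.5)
Three things need checking: measurability with periodicity, the density bound, and the preservation of earlier cell indices.

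\emph{Measurability and periodicity.} For each noninitial window $W_e$, the set of centers violating \eqref{eq:stability-definition} is
\[
 B_e=\bigcup_{y\in\Gamma_{v'}}[y-2b_{u_e},y).
\]
Indeed, $(x,x+2b_{u_e}]$ contains $y$ exactly when $y-2b_{u_e}\le x<y$. Each $B_e$ is a countable union of intervals, hence Borel. Since $N_{v'}$ is an integer, $\Gamma_{v'}+1=\Gamma_{v'}$, so $B_e$ is $1$-periodic. There are fewer than $K$ noninitial windows, and $G=\R\setminus\bigcup_e B_e$, so $G$ is measurable and $1$-periodic.

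\emph{Density bound.} One period contains $N_{v'}$ grid points, so
\[
 \rho(B_e)\le N_{v'}\cdot 2b_{u_e}.
\]
By \eqref{eq:grid-size}, $N_{v'}<2\gamma/b_{v'}$. Since $u_e=v'+g+1$, iterating the upper ratio bound $b_{n+1}\le\beta b_n$ a total of $g+1$ times gives $b_{u_e}\le\beta^{g+1}b_{v'}$. Hence
\[
 \rho(B_e)\le 4\gamma\beta^{g+1},
\]
and summing over at most $K$ edges yields \eqref{eq:bad-density}.

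\emph{Preservation of earlier indices.} Let $x\in G$, $n\in W_e$, $t\in[1,2]$, and let $f$ precede $e$. Write $v'$ for the endpoint of the window immediately preceding $e$. Then $v_f\le v'$, so $N_{v_f}$ divides $N_{v'}$ and $\Gamma_{v_f}\subset\Gamma_{v'}$.

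Since $(b_n)$ is decreasing and $n\ge u_e$,
\[
 0<tb_n\le 2b_{u_e}.
\]
Stability therefore says that $(x,x+tb_n]$ contains no point of $\Gamma_{v'}$, and in particular no point of $\Gamma_{v_f}$.

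The cell of $\Gamma_{v_f}$ containing $x$ has the form $[y,y+1/N_{v_f})$ with $y\in\Gamma_{v_f}$. Suppose $x+tb_n$ fell outside this cell. It lies to the right of $x$, so it would be at least $y+1/N_{v_f}$. That point belongs to $\Gamma_{v_f}$ and lies in $(x,x+tb_n]$, a contradiction. So $x+tb_n$ lies in the same half-open cell.

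Integer points belong to $\Gamma_{v_f}$, so the two points also have the same integer part. Hence $J_{v_f}(x+tb_n)=J_{v_f}(x)$, which is \eqref{eq:earlier-indices}.

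\emph{Main obstacle.} The only step needing care is reducing the condition for an arbitrary earlier edge $f$ to the single condition imposed at the immediately preceding window. This is handled by the nestedness $\Gamma_{v_f}\subset\Gamma_{v'}$, which relies on the $N_j$ being nondecreasing powers of two. An equivalent route is to use \eqref{eq:nested} directly.
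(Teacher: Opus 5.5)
Your proof is correct and follows essentially the same route as the paper: you describe the bad set as $\bigcup_{z\in\Gamma_{v'}}[z-2b_{u_e},z)$, bound its density via \eqref{eq:grid-size} and $u_e-v'=g+1$, and show that a stable center's translate crosses no grid boundary. The only differences are cosmetic: you reach earlier edges through the inclusion $\Gamma_{v_f}\subset\Gamma_{v'}$ rather than through \eqref{eq:nested} applied after fixing $J_{v'}$, and you spell out measurability and periodicity, which the paper leaves implicit.
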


\begin{proof}
Failure of~\eqref{eq:stability-definition} occurs on
\[
 \bigcup_{z\in\Gamma_{v'}}[z-2b_{u_e},z).
\]
Its density is at most
\[
 2N_{v'}b_{u_e}
 \le4\gamma\frac{b_{u_e}}{b_{v'}}
 \le4\gamma\beta^{g+1},
\]
because $u_e-v'=g+1$. A union bound over the noninitial windows gives
\eqref{eq:bad-density}.

For a stable center, $0<tb_n\le2b_{u_e}$, so the translation crosses
no boundary of $\Gamma_{v'}$. The cell index is unchanged, including
when $x$ itself is a boundary: the cell convention assigns it to the
cell on its right. Equation~\eqref{eq:nested} gives
\eqref{eq:earlier-indices}. The first window has no preceding edges.
\end{proof}

\begin{lemma}\label{lem:separation}
For every edge $e$, center $x\in\R$, and $t\in[1,2]$, the points
\[
 \{x\}\cup\{x+tb_n:n\in W_e\}
\]
have pairwise distinct cell indices under $J_{v_e}$ and under every
finer grid $J_j$, $j\ge v_e$.
\end{lemma}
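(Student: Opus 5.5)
The plan is to reduce everything to one elementary fact: two real numbers whose difference is at least $1/N_j$ and at most $1-1/N_j$ have different periodic cell indices under $J_j$. The cells of $\Gamma_j$ have length $1/N_j$, and $J_j$ depends only on $z$ modulo $1$. Hence if $J_j(y)=J_j(z)$, then $\{y\}$ and $\{z\}$ lie in the same half-open cell, so their difference is less than $1/N_j$ in absolute value. It follows that $y-z$ is within distance less than $1/N_j$ of an integer. So it suffices to show that all pairwise differences of the points
\[
x,\qquad x+tb_n\quad(n\in W_e)
\]
lie in $[1/N_j,\,1-1/N_j]$ in absolute value, for every $j\ge v_e$. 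Since $N_j$ is nondecreasing in $j$, it is enough to check this for $j=v_e$; finer grids only make $1/N_j$ smaller.

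For the upper bound, every difference is at most $tb_{u_e}\le 2b_{n_0}\le 1/4$, because $u_e\ge n_0$ and $(b_n)$ is decreasing. Also $1/N_{v_e}\le b_{v_e}/\gamma\le 1/4$ by \eqref{eq:grid-size}, so $1-1/N_{v_e}\ge 3/4>1/4$. Thus no difference comes close to a nonzero integer.

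For the lower bound, I split into two kinds of pairs.
\begin{itemize}
\item \emph{Pairs $x$ and $x+tb_n$.} The difference is $tb_n\ge b_{v_e}$, since $n\le v_e$. By \eqref{eq:grid-size}, $1/N_{v_e}\le b_{v_e}/\gamma<b_{v_e}$, because $\gamma=4/(1-\beta)>1$.
\item \emph{Pairs $x+tb_n$ and $x+tb_m$ with $n<m$ in $W_e$.} The difference is $t(b_n-b_m)\ge b_n-b_{n+1}$, using $t\ge1$ and the monotonicity of $(b_n)$. By \eqref{eq:two-sided-intro}, $b_n-b_{n+1}\ge(1-\beta)b_n\ge(1-\beta)b_{v_e}$. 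This is at least $(1-\beta)\gamma/N_{v_e}=4/N_{v_e}>1/N_{v_e}$.
\end{itemize}

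All differences then lie in $[1/N_j,1/4]\subset[1/N_j,1-1/N_j]$, and the elementary fact gives pairwise distinct indices. The only delicate point is the half-open cell convention: two points in one cell differ by strictly less than the cell length. This is why a difference of exactly $1/N_j$ is also harmless, and I expect no real obstacle beyond that.
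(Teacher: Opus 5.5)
Your proposal is correct and follows essentially the same argument as the paper. All the points lie in an interval of length at most $2b_{n_0}\le 1/4$, so wrap-around modulo $1$ causes no trouble, and the differences $tb_n\ge b_{v_e}$ and $t(b_n-b_m)\ge(1-\beta)b_{v_e}\ge 4/N_{v_e}$ both exceed the cell width. The only difference is for finer grids $j\ge v_e$: you use $1/N_j\le 1/N_{v_e}$ and rerun the same bound, whereas the paper invokes the nesting relation \eqref{eq:nested}; both are valid.
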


\begin{proof}
All these points lie in an interval of length at most
$2b_{n_0}\le1/4$. Their circular distances therefore equal their real
distances. The distance from $x$ to a translated point is at least
$b_{v_e}$. If $n<n'$ belong to $W_e$, then
\[
 t(b_n-b_{n'})\ge b_n-b_{n+1}
 \ge(1-\beta)b_n\ge(1-\beta)b_{v_e}.
\]
By~\eqref{eq:grid-size}, the cell width is at most
\[
 \frac1{N_{v_e}}\le\frac{b_{v_e}}{\gamma}
 =\frac{(1-\beta)b_{v_e}}4.
\]
Two points with the same cell index have circular distance strictly
less than this width. Their indices are therefore distinct, and
\eqref{eq:nested} preserves distinctness at finer resolutions.
\end{proof}

\section{Random paths and independent tests}\label{sec:probability}

Fix $\tau\in(0,1)$. The tree and the windows remain deterministic.
For each edge $e=(P,i)$ with $i<M$, choose a table
\[
 S_e:\{0,\ldots,N_{v_e}-1\}\longrightarrow\{0,1\}
\]
whose entries are independent Bernoulli-$1/2$ random variables.
Child $M$ is the default child and has no selector table. For each leaf $L$, let $v(L)$ be the
endpoint of its incoming window, and choose a table
\[
 T_L:\{0,\ldots,N_{v(L)}-1\}\longrightarrow\{0,1\}
\]
of independent Bernoulli-$\tau$ random variables. All table entries
are mutually independent. This gives a finite probability space with probability
$\PP$ and expectation $\EE$.

A point $z\in\R$ follows one path from the root to a leaf. At an
internal vertex $P$, it chooses the first child $P_i$, $i<M$, such that
$S_{(P,i)}(J_{v_{(P,i)}}(z))=1$. If none exists, it chooses $P_M$.
Write $L(z)$ for the leaf reached, and define
\begin{equation}\label{eq:random-set}
 B=\{z\in\R:T_{L(z)}(J_{v(L(z))}(z))=1\}.
\end{equation}
Every realization of $B$ is periodic and is a union of finitely many
half-open cells per period.

Let $\cA$ be the sigma-field generated by all selector entries.
For fixed $z$, conditioning on $\cA$ determines its leaf and terminal
address. The entry at that address still has probability $\tau$ of
being one. Consequently,
\begin{equation}\label{eq:mean-density}
 \PP(z\in B\mid\cA)=\tau,
 \qquad \EE\rho(B)=\int_0^1\PP(z\in B)\,dz=\tau.
\end{equation}
Thus increasing the number of children does not increase the expected
density of $B$.

\subsection{Conditioning at a center}
Fix $x\in\R$. In every selector table, reveal the entry whose cell
contains $x$, including tables outside the path of $x$. Let
\begin{equation}\label{eq:center-field}
 \cF_x=\sigma\bigl(S_e(J_{v_e}(x)):e=(P,i),\ i<M\bigr)
 \subset\cA.
\end{equation}
An atom $\xi$ of $\cF_x$ specifies all these values. The cells are
fixed once $x$ is fixed. Conditional on $\xi$, all other selector
entries are still independent Bernoulli-$1/2$ variables. The terminal entries are
still independent Bernoulli-$\tau$ variables.

The revealed values determine the path of $x$. To compute the law of
this path, first condition only on the choices at the strict ancestors
of a visited vertex. The $M-1$ selector entries at that vertex are
still independent Bernoulli-$1/2$ variables. The conditional probability of choosing
the default child is therefore $2^{1-M}$. Every path has $d$ steps,
so, before conditioning on an atom of $\cF_x$, we have
\begin{equation}\label{eq:no-default}
 \PP(\text{the path of }x\text{ has no default choice})
 =(1-2^{1-M})^d.
\end{equation}

Suppose that $x\in G$ and that $\xi$ determines a first default vertex
$U$ of height $h$. Write
\[
 r=r_h,\qquad e_i=(U,i)\ (1\le i<M),\qquad y_n(t)=x+tb_n.
\]
All selectors at $x$ on the edges $e_i$ are zero. For each $i<M$,
let $L_i(z)$ be the leaf reached by starting the same path rule at
$U_i$. If $U_i$ is a leaf, let $L_i(z)=U_i$. Define the local test
\begin{equation}\label{eq:local-test}
 Q_i(z)=S_{e_i}(J_{v_{e_i}}(z))\,
 T_{L_i(z)}(J_{v(L_i(z))}(z)).
\end{equation}
It uses only the edge $e_i$ and its child subtree.

\begin{lemma}\label{lem:path}
For every outcome in $\xi$, every $i<M$, $n\in W_{e_i}$, and
$t\in[1,2]$, the actual path of $y_n(t)$ reaches $U$ and rejects
children $1,\ldots,i-1$ there. In particular,
\begin{equation}\label{eq:test-hit}
 Q_i(y_n(t))=1\quad\Longrightarrow\quad y_n(t)\in B.
\end{equation}
\end{lemma}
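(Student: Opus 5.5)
The plan is to compare the path of $y=y_n(t)$ with the path of $x$ step by step, using Lemma~\ref{lem:stable} for all edges before $e_i$ and Lemma~\ref{lem:separation} only to know that $y$ lies in a cell different from that of $x$ on $e_i$ and below. Fix an outcome in $\xi$, an index $i<M$, $n\in W_{e_i}$ and $t\in[1,2]$. Since $x\in G$, Lemma~\ref{lem:stable} gives $J_{v_f}(y)=J_{v_f}(x)$ for every edge $f$ preceding $e_i$ in preorder.

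\emph{First claim: $y$ reaches $U$.} Every edge on the root-to-$U$ path of $x$ precedes $e_i$, because preorder lists an edge $(P,j)$ before everything below $P_j$. At each strict ancestor $P$ of $U$, the path of $x$ takes some child $P_j$. The selectors it consults are $S_{(P,l)}$ with $l\le j$ and $l<M$, and each such edge $(P,l)$ precedes $(P,j)$, hence precedes $e_i$. The cell indices of $y$ and $x$ agree on all of these edges, so $y$ reads the same selector values and chooses the same child $P_j$. Induction down the path shows that $y$ reaches $U$.

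\emph{Second claim: $y$ rejects children $1,\dots,i-1$ at $U$.} For $l<i$ the edge $e_l$ precedes $e_i$, so $J_{v_{e_l}}(y)=J_{v_{e_l}}(x)$. By the definition of $U$ as a default vertex under $\xi$, we have $S_{e_l}(J_{v_{e_l}}(x))=0$. Therefore $y$ rejects $U_l$. This is the only place where the revealed atom $\xi$ enters, and the argument works for every outcome in $\xi$.

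\emph{Conclusion~\eqref{eq:test-hit}.} Suppose $Q_i(y)=1$. Then $S_{e_i}(J_{v_{e_i}}(y))=1$, so $y$, having reached $U$ and rejected the children before $i$, selects $U_i$. From $U_i$ onward the actual path rule coincides with the rule defining $L_i$, so $L(y)=L_i(y)$. The second factor of $Q_i(y)$ then gives $T_{L(y)}(J_{v(L(y))}(y))=1$, which is exactly the condition $y\in B$ in~\eqref{eq:random-set}. When $U_i$ is a leaf, $L(y)=U_i=L_i(y)$ directly, and the same conclusion holds.

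The main point to check is the preorder claim: every selector edge consulted by $y$ before it reaches $e_i$ is indeed an edge preceding $e_i$. This covers the ancestors' sibling edges $(P,l)$ with $l\le j$ and the edges $e_l$ with $l<i$. These are the only edges for which Lemma~\ref{lem:stable} is available, so the argument must not use any edge later than $e_i$.
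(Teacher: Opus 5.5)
Your proof is correct and follows the paper's argument. Both proofs apply \eqref{eq:earlier-indices} to the selector edges read at the strict ancestors of $U$ and to $e_1,\ldots,e_{i-1}$, all of which precede $e_i$ in preorder, and then observe that from $U_i$ onward the actual path coincides with the rule defining $L_i$. Lemma~\ref{lem:separation}, which your plan says you will use, is never actually needed here; it enters only in Lemma~\ref{lem:independence}.
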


\begin{proof}
At a strict ancestor $P$ of $U$, suppose that $x$ chooses child $P_j$.
Since $U$ is the first default, $j<M$. The decision reads zeros in the
selectors with indices less than $j$ and one in selector $j$. All
these edges precede $e_i$ in preorder. Their values at $y_n(t)$ equal those at $x$
by~\eqref{eq:earlier-indices}. Applying this at each ancestor shows
that $y_n(t)$ reaches $U$.

At $U$, the edges $e_1,\ldots,e_{i-1}$ also precede $e_i$, so their
values at $y_n(t)$ equal the exposed zeros at $x$. If the local test
is one, the actual path therefore chooses $U_i$ and then reaches
$L_i(y_n(t))$, where the terminal entry is one. This is exactly the
membership rule in~\eqref{eq:random-set}.
\end{proof}

\begin{lemma}\label{lem:independence}
Under the same conditioning, for every fixed $t\in[1,2]$,
\begin{equation}\label{eq:failure}
 \PP\bigl(Q_i(y_n(t))=0\text{ for all }i<M,\ n\in W_{e_i}
       \mid\xi\bigr)
 =(1-\tau/2)^{(M-1)r}.
\end{equation}
\end{lemma}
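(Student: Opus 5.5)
Conditional on $\xi$, the $(M-1)r$ variables $Q_i(y_n(t))$, $i<M$, $n\in W_{e_i}$, should be independent Bernoulli-$\tau/2$. That gives \eqref{eq:failure} by multiplying the failure probabilities $1-\tau/2$. Each $W_{e_i}$ has exactly $r=r_h$ indices, since all edges leaving $U$ have window length $r_h$, so the count of tests is $(M-1)r$. The plan is to identify, for each test, the table entries it reads. Then show that for distinct tests these entries are disjoint and are not revealed by $\xi$.

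Fix $t$, $i<M$ and $n\in W_{e_i}$, and write $y=y_n(t)$. The factor $S_{e_i}(J_{v_{e_i}}(y))$ reads the selector $e_i$ at the cell of $y$. By Lemma~\ref{lem:separation}, this cell differs from the cell of $x$, so this entry is not in $\cF_x$. The path from $U_i$ to $L_i(y)$ reads selectors on edges below $U_i$, at the cells of $y$ in their grids $J_{v_f}$, together with one terminal entry $T_{L_i(y)}$ at $J_{v(L_i(y))}(y)$. Every such edge $f$ lies in the block of $e_i$, so $v_f\ge v_{e_i}$. Lemma~\ref{lem:separation} then says that, for fixed $i$, the points $x$ and $y_n(t)$, $n\in W_{e_i}$, occupy pairwise distinct cells in every grid used in the subtree. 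Hence, for fixed $i$, distinct $n$ read disjoint sets of entries in each table of the subtree, and none of the selector cells read equals the cell of $x$. For different $i$, the edges $e_i$ and the subtrees below $U_i$ are disjoint sets of tables. So the entries read by different tests are pairwise disjoint, and none of them is revealed by $\xi$. Terminal tables are not revealed at all.

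The main obstacle is that the set of entries a test reads is itself random, because the path below $U_i$ is adaptive. I would handle this by exploring each test sequentially. Conditional on $\xi$ and on all entries read by the other tests, which lie in disjoint cells or tables, the entries at the cells of $y$ in the subtree of $e_i$ are still fresh independent variables. The first factor $S_{e_i}(\cdot)$ equals one with probability $1/2$. Given it and the selector path below $U_i$, the terminal entry at the reached leaf and cell equals one with probability $\tau$. The terminal cell is distinct from those of other $n$ in the same leaf table, again by Lemma~\ref{lem:separation} at resolution $v(L)\ge v_{e_i}$. So each test has conditional success probability exactly $\tau/2$, whatever the outcomes of the others. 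Written as an iterated conditioning over the tests in any fixed order, this gives the product formula \eqref{eq:failure}.

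Lemma~\ref{lem:path} and stability are not needed here; they only relate $Q_i$ to $B$. The identity holds for every fixed $t$ and every atom $\xi$ with first default at $U$. The one point to check carefully is that each test $(i,n)$ reads at most one entry per table in the subtree of $e_i$. This holds because $y$ follows a single path and thus visits each table in the subtree at most once, at its own cell.
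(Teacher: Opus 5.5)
Your proof is correct, but it takes a different route from the paper's.

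\textbf{The paper's route.} The paper never claims the tests are mutually independent. It writes $Q_i(y_n(t))=A_{i,n}\cdot(\text{terminal entry})$, with $A_{i,n}=S_{e_i}(J_{v_{e_i}}(y_n(t)))$. It then conditions on the full selector field $\cA$, which freezes all paths below the children $U_i$. At that point it only checks that the terminal addresses are pairwise distinct, giving $(1-\tau)^{\sum A_{i,n}}$. Finally it averages over the first-edge selectors $A_{i,n}$, which are independent Bernoulli-$1/2$ given $\xi$.

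\textbf{Your route.} You prove a stronger statement: the $(M-1)r$ outcomes $Q_i(y_n(t))$ are independent Bernoulli-$\tau/2$ given $\xi$. This needs one more observation than the paper uses. The selector entries read below $U_i$ by different $n$ must be distinct, and none of them may be among the entries revealed by $\xi$. Both facts follow from Lemma~\ref{lem:separation}, as you say. The point $x$ is included in the separated set, and every grid in the block of $e_i$ has index at least $v_{e_i}$.

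\textbf{What each buys.} The paper's route is more economical: as it remarks, it does not care whether different tested points share selector randomness along their paths. Yours gives a cleaner structural picture, namely full conditional independence of the tests.

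\textbf{The adaptivity concern.} Your worry about adaptivity is unnecessary. $Q_i(y_n(t))$ is a deterministic function of a fixed finite family of entries: the entries at the cell of $y_n(t)$ in every selector and terminal table of the block of $e_i$. These families are pairwise disjoint across pairs $(i,n)$, and they are disjoint from the entries revealed by $\xi$. Independence therefore follows directly from the fact that functions of disjoint families of independent variables are independent. The sequential exploration and iterated conditioning are not needed.
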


\begin{proof}
The index set
\[
 I=\{(i,n):1\le i<M,\ n\in W_{e_i}\}
\]
is fixed on $\xi$ and has size $(M-1)r$. For $(i,n)\in I$, let
\[
 A_{i,n}=S_{e_i}(J_{v_{e_i}}(y_n(t))).
\]
For fixed $i$, Lemma~\ref{lem:separation} shows that these selector
addresses are distinct and avoid the entry exposed at $x$. For
different $i$, the tables differ. Thus the variables $A_{i,n}$ are
independent Bernoulli-$1/2$ variables conditional on $\xi$.

Now condition on $\cA$, not just $\cF_x$. The local leaves $L_{i,n}$
and terminal addresses
\[
 L_{i,n}=L_i(y_n(t)),\qquad
 \kappa_{i,n}=\bigl(L_{i,n},J_{v(L_{i,n})}(y_n(t))\bigr)
\]
are then fixed. These addresses are pairwise distinct. Different
children of $U$ have disjoint leaf sets. Within one child subtree,
different leaves use different tables. If two tested points reach
the same leaf $L$, then $v(L)\ge v_{e_i}$, and
Lemma~\ref{lem:separation} gives different indices in its terminal
table.

The addressed terminal entries are therefore independent
Bernoulli-$\tau$ variables conditional on $\cA$. Only pairs for
which $A_{i,n}=1$ need a zero terminal entry in order to fail. Hence
\[
 \PP\bigl(Q_i(y_n(t))=0\text{ for all }(i,n)\in I\mid\cA\bigr)
 =(1-\tau)^{\sum_{(i,n)\in I}A_{i,n}}.
\]
Since $\cF_x\subset\cA$, taking conditional expectation on $\xi$
gives
\[
 \EE\left[(1-\tau)^{\sum A_{i,n}}\mid\xi\right]
 =\prod_{(i,n)\in I}\left(\frac12+\frac{1-\tau}{2}\right)
 =(1-\tau/2)^{(M-1)r}.
\]
This proves~\eqref{eq:failure}. Independence of the paths is not
needed: conditional on the selectors, the terminal addresses are
distinct and the corresponding entries are independent.
\end{proof}

\section{All scales and the exceptional centers}\label{sec:scales}

We next reduce the interval of dilations to finitely many values.
The lower ratio bound gives, for $j\ge n$,
\begin{equation}\label{eq:ratio-iterate}
 \frac{b_n}{b_j}\le\alpha^{-(j-n)}.
\end{equation}

\begin{lemma}\label{lem:all-scales}
Fix a stable center $x$ and an atom $\xi$ on which the first default
vertex $U$ has height $h$. Let $r=r_h$ and
\[
 D(r)=4+2(M-1)r\bigl(1+2\gamma\alpha^{-2r}\bigr).
\]
Then
\begin{equation}\label{eq:all-scale-bound}
 \PP\bigl(\exists t\in[1,2]\ \forall n\in\cI:
       x+tb_n\notin B\mid\xi\bigr)
 \le D(r)\exp\bigl(-\tau(M-1)r/2\bigr).
\end{equation}
\end{lemma}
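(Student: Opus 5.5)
The plan is to discretize $t\in[1,2]$ into finitely many representative dilations, apply Lemma~\ref{lem:independence} at each one, and finish with a union bound. The whole argument uses only the local tests $Q_i$ with $n\in W_{e_i}$. By Lemma~\ref{lem:path}, if $Q_i(x+tb_n)=1$ for some $i<M$ and $n\in W_{e_i}\subset\cI$, then $x+tb_n\in B$. Hence the event in~\eqref{eq:all-scale-bound} is contained in the event that some $t\in[1,2]$ has $Q_i(y_n(t))=0$ for all $(i,n)\in I$.

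The key observation is that, for fixed $\xi$ and fixed $(i,n)$, the value $Q_i(y_n(t))$ depends on $t$ only through the cell indices that the test reads. These are the selector index $J_{v_{e_i}}(y_n(t))$ and the indices $J_j(y_n(t))$ for the windows $j$ on $e_i$ and in the subtree below $U_i$. All of these indices are determined by the finest one, $J_{v_{e_i}^*}(y_n(t))$, by~\eqref{eq:nested}. So $Q_i(y_n(t))$ is a function of the single index $J_{v^*_{e_i}}(x+tb_n)$. As $t$ runs over $[1,2]$, the point $x+tb_n$ moves monotonically across an interval of length $b_n$. It therefore crosses at most $N_{v^*_{e_i}}b_n+1$ boundaries of $\Gamma_{v^*_{e_i}}$.

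To count these crossings, combine~\eqref{eq:grid-size} with Lemma~\ref{lem:span} and~\eqref{eq:ratio-iterate}. Since $n\ge u_{e_i}$ and $v^*_{e_i}-u_{e_i}<2r$,
\[
 N_{v^*_{e_i}}b_n<2\gamma\frac{b_{u_{e_i}}}{b_{v^*_{e_i}}}\le 2\gamma\alpha^{-2r}.
\]
Summing over the $(M-1)r$ pairs $(i,n)\in I$, the breakpoints in $t$ where any tested index changes number at most $(M-1)r(1+2\gamma\alpha^{-2r})$. They split $[1,2]$ into at most $1+(M-1)r(1+2\gamma\alpha^{-2r})$ intervals. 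On each interval every $Q_i(y_n(t))$ is constant for every outcome, because the cells are deterministic once $x$ and $t$ are fixed.

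Next choose one representative $t$ in each interval. The half-open cell convention makes each piece half-open, so to be safe I would take both the endpoints and an interior point. This at most doubles the count and adds a few extra points, which is where the form $D(r)=4+2(M-1)r(\cdots)$ comes from. The representative set is deterministic because it depends only on $x$ and the grids, not on the random tables. For each representative, Lemma~\ref{lem:independence} bounds the conditional failure probability by $(1-\tau/2)^{(M-1)r}\le\exp(-\tau(M-1)r/2)$. A union bound over at most $D(r)$ representatives then gives~\eqref{eq:all-scale-bound}.

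The main obstacle is justifying that $Q_i$ is constant between breakpoints. This needs two checks. First, the local leaf $L_i(z)$ and every terminal index it reads must be functions of the finest index $J_{v^*_{e_i}}(z)$; this holds because every window used below $U_i$ ends at or before $v^*_{e_i}$. Second, the breakpoint count must use $b_n\le b_{u_{e_i}}$ for all $n\in W_{e_i}$, which is why the exponent is $2r$ and not something depending on the whole tree. I would verify the endpoint bookkeeping carefully, but any constant-factor slack is absorbed into $D(r)$.
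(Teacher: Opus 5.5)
Your proposal is correct and is essentially the paper's proof: you reduce via Lemma~\ref{lem:path} to simultaneous failure of the local tests, note that each $Q_i(y_n(t))$ depends only on $J_{v^*_{e_i}}(y_n(t))$, bound the crossings per pair by $1+2\gamma\alpha^{-2r}$ using \eqref{eq:span} and \eqref{eq:ratio-iterate}, take the breakpoints together with $1$, $2$ and midpoints as a representative set fixed on $\xi$, and conclude with Lemma~\ref{lem:independence} and a union bound. Your endpoint bookkeeping also works out: with at most $k\le(M-1)r(1+2\gamma\alpha^{-2r})$ breakpoints you get at most $2k+3\le D(r)$ representatives, as in the paper.
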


\begin{proof}
The local test $Q_i(z)$ is determined by $J_{v_{e_i}^*}(z)$ for every
realization of the tables: all of its selectors and terminal tables
have resolutions at most $v_{e_i}^*$, and the grids are nested.
For $n\in W_{e_i}$, the segment traversed by $y_n(t)$ as
$t\in[1,2]$ has length $b_n$. It meets at most
\begin{equation}\label{eq:crossings}
 1+b_nN_{v_{e_i}^*}
 \le1+2\gamma\frac{b_n}{b_{v_{e_i}^*}}
 \le1+2\gamma\alpha^{-2r}
\end{equation}
boundaries of the lattice $\Gamma_{v_{e_i}^*}$. The last inequality
uses~\eqref{eq:ratio-iterate} and~\eqref{eq:span}.

Collect all these boundary scales for $i<M$ and $n\in W_{e_i}$,
together with $1$ and $2$. List their distinct values in increasing
order and add one midpoint between each consecutive pair. The
resulting finite set $\cT_{x,U}$ satisfies
\[
 \#\cT_{x,U}\le D(r).
\]
The set depends only on $x$, $U$, and the chosen grids and windows.
It is fixed on $\xi$, independently of the unrevealed entries.
Between consecutive boundary scales, all relevant cell indices are
constant. Boundary scales are included separately, so no endpoint
is lost under the half-open cell convention. For every $t\in[1,2]$
there is $t'\in\cT_{x,U}$ with
\[
 Q_i(y_n(t))=Q_i(y_n(t'))\qquad(i<M,\ n\in W_{e_i}).
\]

If $x+tb_n\notin B$ for all $n\in\cI$, then
Lemma~\ref{lem:path} forces all the local tests to fail. They also
fail at a representative $t'$. Apply Lemma~\ref{lem:independence}
to each representative and take a union bound. Since
$1-\tau/2\le e^{-\tau/2}$, this gives~\eqref{eq:all-scale-bound}.
This union bound does not require independence between different
representative scales.
\end{proof}

\begin{proof}[Proof of Proposition~\ref{prop:periodic}]
Choose the parameters in the following order. First take $M$ so that
\begin{equation}\label{eq:branch-choice}
 \lambda:=\frac{\tau(M-1)}2>2\log(1/\alpha).
\end{equation}
Choose $d$ so that $(1-2^{1-M})^d<\tau$. This fixes the tree and
its number $K$ of edges. Next choose $g$ so that
\[
 4K\gamma\beta^{g+1}<\tau.
\]
Finally,
\[
 D(r)e^{-\lambda r}
 =O\left(r e^{-(\lambda-2\log(1/\alpha))r}\right)
 \longrightarrow0.
\]
Choose $r_0$ large enough that $D(r)e^{-\lambda r}<\tau$ for every
integer $r\ge r_0$, and construct the windows of
Section~\ref{sec:grids}. The tree and $g$ remain fixed as the
window lengths increase. Every window length is at least $r_0$, so
the last inequality applies to each window.

For a fixed stable center $x$, equation~\eqref{eq:no-default} bounds
the probability of no default by $\tau$. On every atom of $\cF_x$ where
a first default exists, Lemma~\ref{lem:all-scales} bounds the
conditional probability of a missed scale by $\tau$. Thus
\begin{equation}\label{eq:fixed-center}
 \PP\bigl(\exists t\in[1,2]\ \forall n\in\cI:
       x+tb_n\notin B\bigr)\le2\tau
 \qquad(x\in G).
\end{equation}
The probability space is finite. Hence the event at each fixed center
is measurable, although it involves every $t\in[1,2]$.

For each outcome $\omega$, enlarge $B_\omega$ to an open periodic set
$B_\omega^+$ satisfying
\[
 B_\omega\subset B_\omega^+,
 \qquad \rho(B_\omega^+)\le\rho(B_\omega)+\tau.
\]
There are only finitely many cells in one period. We may add their
endpoints and then take sufficiently small open neighborhoods in
$\T$. Define
\begin{equation}\label{eq:exceptional-set}
 R_\omega=\{x\in\R:\exists t\in[1,2]\ \forall n\in\cI,
                     \ x+tb_n\notin B_\omega^+\}.
\end{equation}
This set is periodic and closed. To see closedness, work on $\T$.
For each $n\in\cI$, the map $(x,t)\mapsto x+tb_n\pmod1$ is
continuous, and $\T\setminus B_\omega^+$ is closed. The pairs
$(x,t)$ in~\eqref{eq:exceptional-set} therefore form a closed subset
of the compact space $\T\times[1,2]$. Its projection onto $\T$ is
compact, and hence closed.

For $x\in G$, inclusion $B_\omega\subset B_\omega^+$ and
\eqref{eq:fixed-center} give $\PP(x\in R_\omega)\le2\tau$.
At other centers this probability is at most one. The finite
probability space and closedness of $R_\omega$ justify integration,
and Lemma~\ref{lem:stable} gives
\[
 \EE\rho(R_\omega)
 \le2\tau\rho(G)+\rho(\R\setminus G)\le3\tau.
\]
Also, by~\eqref{eq:mean-density}, $\EE\rho(B_\omega^+)\le2\tau$.
Choose an outcome for which
\[
 \rho(B^+)+\rho(R)\le5\tau.
\]
Outer regularity on $\T$ gives an open periodic set $V\supset R$
with $\rho(V)\le\rho(R)+\tau$. Then
\[
 H=B^+\cup V
 \qquad\text{satisfies}\qquad \rho(H)\le6\tau.
\]

Fix $x\in\R$ and $t\in[1,2]$. If $x\notin R$, the definition
of $R$ gives some $n\in\cI$ with $x+tb_n\in B^+$. If $x\in R$,
then $x\in V$ and $V$ is open, so $x+tb_n\in V$ for all sufficiently
large $n$, since $b_n\to0$. The latter indices need not belong to
$\cI$. Both cases prove~\eqref{eq:periodic-target}.
\end{proof}

\section{Consequences and the role of the hypothesis}

For a countable family, apply the main theorem to each tail with
summable bounds on the measure removed.

\begin{corollary}\label{cor:countable}
Let $(a_n^{(j)})_{n\ge1}$, $j\ge1$, be a countable family of positive
null sequences, each satisfying~\eqref{eq:main-hypothesis}. For every
$\eta\in(0,1)$ there is a compact $E\subset[0,1]$ with $m(E)>1-\eta$
such that, for every $j$, $x\in\R$, and $s\ne0$, infinitely many
distinct points of $\{x+sa_n^{(j)}:n\ge1\}$ lie outside $E$.
\end{corollary}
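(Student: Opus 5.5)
The plan is to apply Theorem~\ref{thm:main} to every tail of every sequence, with summable measure budgets, and intersect the resulting sets. Enumerate the pairs $(j,k)$ with $j,k\ge1$ as a single sequence $p=1,2,\ldots$. For the pair with index $p$, consider the tail sequence $(a^{(j)}_n)_{n\ge k}$. It is again a positive null sequence. It satisfies~\eqref{eq:main-hypothesis}, since the $\liminf$ is unchanged by dropping finitely many terms. Apply Theorem~\ref{thm:main} with $\eta_p=\eta2^{-p-1}$ to obtain a compact $E_{j,k}\subset[0,1]$ with $m(E_{j,k})>1-\eta_p$, such that every nontrivial affine copy of the tail misses $E_{j,k}$ at some index $n\ge k$. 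Set $E=\bigcap_{j,k}E_{j,k}$. It is compact as an intersection of compact sets, and
\[
 m([0,1]\setminus E)\le\sum_p\eta_p=\eta/2<\eta .
\]

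Now fix $j$, $x$, and $s\ne0$. For each $k$ there is $n\ge k$ with $x+sa^{(j)}_n\notin E_{j,k}\supset E$. So the set $N$ of indices $n$ with $x+sa^{(j)}_n\notin E$ is infinite. The remaining point is to get infinitely many \emph{distinct} points, since repeated values of $a^{(j)}_n$ could a priori give the same point. Because $s\ne0$, the map $n\mapsto x+sa^{(j)}_n$ is injective on values. It therefore suffices that $\{a^{(j)}_n:n\in N\}$ is infinite. This holds because $a^{(j)}_n\to0$ with all terms positive: each positive value is attained only finitely often, since otherwise a constant positive subsequence would contradict convergence to zero. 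An infinite index set thus carries infinitely many distinct values. Consequently, the points $x+sa^{(j)}_n$ for $n\in N$ are infinitely many distinct points outside $E$.

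No step is a real obstacle. The only care needed is the distinctness remark just given and the routine check that tails inherit~\eqref{eq:main-hypothesis}. An alternative is to apply Theorem~\ref{thm:main} only to the subsequences from Lemma~\ref{lem:subsequence}, which are strictly decreasing. That would make distinctness automatic, but the tail argument above already suffices.
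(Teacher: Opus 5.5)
Your proposal is correct and matches the paper's proof: apply Theorem~\ref{thm:main} to every tail $(a^{(j)}_n)_{n\ge k}$ with summable budgets and intersect the resulting compact sets. The paper uses $\eta2^{-j-N-1}$ where you use an enumeration with $\eta2^{-p-1}$; both sum to $\eta/2$, and distinctness of the points follows, as you say, because a positive null sequence takes each positive value only finitely often.
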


\begin{proof}
For each $j,N\ge1$, apply Theorem~\ref{thm:main} to the tail
$(a_n^{(j)})_{n\ge N}$ with $\eta$ replaced by $\eta2^{-j-N-1}$. Let $E_{j,N}$
be the resulting compact set and define $E=\bigcap_{j,N\ge1}E_{j,N}$.
Then $E$ is compact and
\[
 m([0,1]\setminus E)
 \le\sum_{j,N\ge1}\eta2^{-j-N-1}=\frac\eta2<\eta.
\]
For fixed $j,x,s$ and every $N$, some $n\ge N$ satisfies
$x+sa_n^{(j)}\notin E_{j,N}$ and hence $x+sa_n^{(j)}\notin E$.
There are infinitely many such indices. They give infinitely many
distinct points, because a positive null sequence takes each fixed
positive value only finitely often.
\end{proof}

For example, Theorem~\ref{thm:main} applies when
\[
 a_n=a_1\prod_{k=1}^{n-1}r_k,
 \qquad r_k\in\{1/3,1/2\},
\]
with an arbitrary choice at each stage. It also applies to
$a_n=e^{-n-\sqrt n}$, whose adjacent ratios tend to $e^{-1}$.
The case $a_n=e^{-n^\theta}$ with $0<\theta<1$ is included, although
that case already follows from the results for ratios tending to one.

The use of the lower ratio bound in~\eqref{eq:crossings} can also
be seen by writing $u_n=-\log b_n$. The two-sided bounds give
\[
 \log(1/\beta)\le u_{n+1}-u_n\le\log(1/\alpha).
\]
Across at most $2r$ consecutive steps, $u_n$ increases by at most
$2r\log(1/\alpha)$. This bounds the number of representative dilations
by an exponential in $r$. The failure probability decreases faster
by our choice of $M$.

The proof does not give the same conclusion for every sequence with
$\liminf a_{n+1}/a_n=0$. For instance, if $b_n=2^{-2^n}$, then
\[
 \frac{b_n}{b_m}=2^{2^m-2^n}\qquad(m>n),
\]
so~\eqref{eq:ratio-iterate} fails for every fixed $\alpha>0$.
This is a limitation of the estimate, not a claim of measure universality.

To settle the full conjecture, it would suffice to treat positive null
sequences with $a_{n+1}/a_n\to0$. Indeed, every positive null sequence
has a subsequence $(c_j)$ with $c_{j+1}\le2^{-j}c_j$, and avoiding a
subset suffices to avoid the whole set. After translation and possibly
reflection, every bounded infinite set contains a positive null
sequence. Unbounded sets are avoided by any bounded set of positive
measure.

\end{document}